\numberwithin{equation}{section}
\theoremstyle{definition}
\newtheorem{Definition}{Definition}[section]
\newtheorem{Example}[Definition]{Example}
\newtheorem{Remark}[Definition]{Remark}
\newtheorem{Problem}[Definition]{Problem}
\theoremstyle{plain}
\newtheorem{Theorem}[Definition]{Theorem}
\newtheorem{Proposition}[Definition]{Proposition}
\newtheorem{Lemma}[Definition]{Lemma}
\newcommand{\al}{\alpha}
\newcommand{\ga}{\gamma}
\newcommand{\Ga}{\Gamma}
\newcommand{\de}{\delta}
\newcommand{\ep}{\varepsilon}
\newcommand{\la}{\lambda}
\newcommand{\La}{\Lambda}
\newcommand{\si}{\sigma}
\newcommand{\ph}{\varphi}
\newcommand{\N}{\mathbb{N}}
\newcommand{\Z}{\mathbb{Z}}
\newcommand{\C}{\mathbb{C}}
\newcommand{\K}{\Bbbk}
\newcommand{\F}{\mathbb{F}}
\newcommand{\Fgl}{\mathfrak{gl}}
\newcommand{\Fsl}{\mathfrak{sl}}
\newcommand{\Fb}{\mathfrak{b}}
\newcommand{\Fg}{\mathfrak{g}}
\newcommand{\Fn}{\mathfrak{n}}
\DeclareMathOperator{\Aut}{Aut}
\DeclareMathOperator{\End}{End}
\DeclareMathOperator{\Frac}{Frac}
\DeclareMathOperator{\Gal}{Gal}
\DeclareMathOperator{\Supp}{Supp}
\newcommand{\iv}[2]{\llbracket #1,#2 \rrbracket}
\newcommand{\un}{\underline}
\renewcommand{\hat}{\widehat}
\title[The $q$-Difference Noether Problem for Complex Reflection Groups...]{The $q$-Difference Noether Problem for Complex Reflection Groups and Quantum OGZ Algebras}
\author{Jonas T. Hartwig}
\keywords{quantum Gelfand-Kirillov conjecture, non-commutative invariant theory, quantum group, Gelfand-Tsetlin basis, Galois ring}
\subjclass[2010]{Primary 17B37; Secondary 16K40, 16S35}
\address{Department of Mathematics, Iowa State University, Ames, IA-50011, USA}
\email{jth@iastate.edu}
\begin{document}
\maketitle
\begin{abstract}
For any complex reflection group $G=G(m,p,n)$, we prove that the $G$-invariants of the division ring of fractions of the $n$:th tensor power of the quantum plane is a quantum Weyl field and give explicit parameters for this quantum Weyl field. This shows that the $q$-Difference Noether Problem has a positive solution for such groups, generalizing previous work by Futorny and the author \cite{FutHar2014}. Moreover, the new result is simultaneously a $q$-deformation of the classical commutative case, and of the Weyl algebra case recently obtained by Eshmatov et al. \cite{EshFutOvsFer2015}.

Secondly, we introduce a new family of algebras called \emph{quantum OGZ algebras}. They are natural quantizations of the OGZ algebras introduced by Mazorchuk \cite{Mazorchuk1999} originating in the classical Gelfand-Tsetlin formulas.
Special cases of quantum OGZ algebras include the quantized enveloping algebra of $\mathfrak{gl}_n$  and quantized Heisenberg algebras.
We show that any quantum OGZ algebra can be naturally realized as a Galois ring in the sense of Futorny-Ovsienko \cite{FutOvs2010}, with symmetry group being a direct product of complex reflection groups $G(m,p,r_k)$.

Finally, using these results we prove that the quantum OGZ algebras satisfy the quantum Gelfand-Kirillov conjecture by explicitly computing their division ring of fractions.
\end{abstract}

\tableofcontents

\section{Introduction and Summary of Main Results}
In this paper,
$m$ is a positive integer,
$\F$ is a field of characteristic zero containing a primitive $m$:th root of unity,
$q\in\F\setminus\{0\}$ is an element which is not a root of unity, 
$\N$ is the set of positive integers and $\iv{a}{b}=[a,b]\cap\Z$.
By ``algebra'' we mean ``unital associative $\F$-algebra'', and $\otimes=\otimes_\F$.
The division ring of fractions of an Ore domain $A$ is denoted by $\Frac A$.

\subsection{The $q$-Difference Noether Problem}
Let $G$ be a finite group, acting linearly on an $\F$-vector space $V$. Then $G$ acts on the symmetric algebra on $V$, hence on its field of fractions $L$. If $\{x_1,x_2,\ldots,x_n\}$ is a basis for $V$, then $L=\F(x_1,x_2,\ldots,x_n)$. \emph{Noether's Problem} asks if the subfield $L^G$, consisting of all elements fixed by $G$, is a purely transcendental field extension of $\F$. In other words, is $L^G$ is isomorphic to a field of rational functions $\F(y_1,y_2,\ldots,y_N)$ for some $N$? In fact, if this is true, then necessarily $N=n$, see e.g. \cite[Prop.~3.1]{Humphreys1990}.

If $G$ is a complex reflection group and $V$ its defining reflection representation (assuming $m$ is large enough so that $\F$ contains the necessary roots of unity), the answer to Noether's Problem is affirmative, by virtue of the Chevalley-Shephard-Todd theorem \cite{SheTod1954} stating \textit{a fortiori} that the algebra of $G$-invariant polynomials, $\F[x_1,x_2,\ldots,x_n]^G$, is a polynomial algebra over $\F$. 

On the other hand, if $V$ is the direct product of $d>1$ copies of the reflection representation of a complex reflection group $G$, then the $G$-invariant polynomials on $V$ do not form a polynomial algebra. This follows from Molien's formula for the Poincar\'{e} series of the invariant subalgebra (see e.g. \cite[Ex.~1]{Smith1997}). Nevertheless, 
by a theorem of Miyata \cite[Rem.~3]{Miyata1971},
Noether's Problem has a positive solution in this case: it suffices to observe that $V$ contains a faithful $G$-submodule for which the conclusion holds (namely any one of the $d$ copies of the reflection representation).
The special case of $G=S_n$, the symmetric group, was established earlier by Mattuck  \cite{Mattuck1968}; see also \cite{Colin1996} for an explicit construction.
As an example, for $d=2$, Miyata's result gives an isomorphism
\begin{equation}\label{eq:commutative-example}
\F(x_1,x_2,\ldots,x_n; y_1,y_2,\ldots,y_n)^G\simeq 
\F(x_1,x_2,\ldots,x_n; y_1,y_2,\ldots,y_n),
\end{equation}
where $G$ is acting diagonally, e.g.
if $G$ is the symmetric group, then $\si(x_i)=x_{\si(i)}$ and $\si(y_i)=y_{\si(i)}$.

\emph{Non-commutative rational invariant theory} (see e.g. \cite[Ch.~5]{Dumas2010}) involves the study of division rings and their subrings of invariants under groups of automorphisms. The first main result of the present paper is a non-commutative analog of \eqref{eq:commutative-example}.

Let $\F_q[x,y]$ be the \emph{quantum plane}, defined as the algebra with generators $x,y$ and defining relation $yx=qxy$. The complex reflection group $G(m,p,n)$
where $p,n\in\N$ and $p|m$, acts naturally on the $n$:th tensor power of the quantum plane (see Section \ref{sec:q-DifferenceNoether} for details), hence on the associated division ring of fractions. 

\begin{Theorem} \label{thm:I} 
For any $m,p,n\in\N$, $p|m$ there is an isomorphism of $\F$-algebras
\begin{equation}\label{eq:main1}
\left(\Frac \F_q[x,y]^{\otimes n}\right)^{G(m,p,n)}\simeq \Frac \left(\F_{q^{m/p}}[x,y]\otimes \F_{q^m}[x,y]^{\otimes (n-1)}\right).
\end{equation}
\end{Theorem}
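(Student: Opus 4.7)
The plan is to pass to the quantum torus and compute invariants in two stages via the semidirect product decomposition $G(m,p,n) = A(m,p,n) \rtimes S_n$, where $A(m,p,n)$ denotes the abelian normal subgroup of diagonal monomial matrices.

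First I would replace $\F_q[x,y]^{\otimes n}$ by its Ore localization $T = \F_q[\mathbf{x}^{\pm 1}, \mathbf{y}^{\pm 1}]$, which has the same division ring of fractions and carries the same $G(m,p,n)$-action; then $(\Frac T)^{G(m,p,n)} = ((\Frac T)^{A(m,p,n)})^{S_n}$. Since $A(m,p,n)$ acts monomially, its invariants $T^{A(m,p,n)}$ form a sub-quantum-torus on the sublattice
\[ L = \{(\mathbf{a}, \mathbf{b}) \in \Z^{2n} : a_i - b_i \equiv a_j - b_j \pmod m,\; a_i - b_i \equiv 0 \pmod{m/p}\}, \]
with basis $z = \prod_i x_i^{m/p}$, $u_i = x_i y_i$ for $1 \leq i \leq n$, and $X_j = x_j^m$ for $2 \leq j \leq n$. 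A direct computation of the $q$-commutation bilinear form restricted to $L$, followed by the integral change of basis $\tilde e_1 = u_1,\; \tilde f_1 = z,\; \tilde e_j = u_j - u_1,\; \tilde f_j = X_j$ for $j \geq 2$, block-diagonalizes the form: the single $(\tilde e_1, \tilde f_1)$-block has parameter $q^{m/p}$ and the $n-1$ blocks $(\tilde e_j, \tilde f_j)$ have parameter $q^m$. Hence, as an abstract quantum torus, $T^{A(m,p,n)}$ is isomorphic to $\F_{q^{m/p}}[x^{\pm 1}, y^{\pm 1}] \otimes \F_{q^m}[x^{\pm 1}, y^{\pm 1}]^{\otimes (n-1)}$, and passing to fraction fields already recovers the right-hand side of \eqref{eq:main1} before $S_n$-invariants are taken.

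The remaining step is to verify that taking $S_n$-invariants does not change the isomorphism class of the fraction field. Since the $S_n$-action on $T^{A(m,p,n)}$ permutes $\{u_i\}$ and $\{X_i\}$ while fixing $z$, I would apply a non-commutative analog of Mattuck's theorem: construct explicit $S_n$-invariant elements $\alpha_k, \beta_k \in \Frac T^{A(m,p,n)}$ satisfying the desired $q^{m/p}$- and $q^m$-commutation relations and generating the invariant subring. This would generalize the symmetric-function construction of \cite{FutHar2014}, which treated the $p = 1$ case.

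The hardest part will be this last step. The natural $S_n$-action, which permutes the generators $\{u_i, X_i\}$, is only affine in the block-diagonal basis $\{z, u_1, u_j - u_1, X_j\}$ produced in the second stage, so the two decompositions of $T^{A(m,p,n)}$ are not simultaneously compatible. Producing $S_n$-invariant generators that simultaneously realize the block tensor structure and generate the entire $S_n$-fixed subring requires carefully combining symmetric-function techniques in $\{u_i, X_i\}$ with the lattice decomposition from the abelian stage.
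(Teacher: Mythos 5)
Your first stage is correct: after passing to the quantum torus, the $A(m,p,n)$-invariants form a quantum subtorus on a rank-$2n$ sublattice, and your unimodular change of basis does block-diagonalize the commutation form into one block with parameter $q^{m/p}$ and $n-1$ blocks with parameter $q^m$, recovering the right-hand side of \eqref{eq:main1} before symmetrization. (A minor point: with the action \eqref{eq:G-action-on-xy} the invariance condition constrains the $x$-exponents $a_i$ alone, not $a_i-b_i$; your condition corresponds to the equivalent action obtained by the substitution $y_i\mapsto x_iy_i$, so this is only a convention mismatch.) The genuine gap is the second stage, which you yourself flag as the hardest part but do not carry out: you must show that taking $S_n$-invariants of this quantum torus again yields a quantum Weyl field with the \emph{same} parameters, and here the result you want to invoke from \cite{FutHar2014} does not apply. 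That theorem concerns the standard permutation action of $S_n$ on $\F_q(\un{x},\un{y})$, whereas after taking $A(m,p,n)$-invariants for $p>1$ the permutation action (permuting $u_i$ and $X_i$, fixing $z=\prod_i x_i^{m/p}$, with the relation $z^p=X_1\cdots X_n$ tying the element $z$ of $q^{m/p}$-weight to the permuted variables) is not the standard action on any block tensor decomposition, as your own last paragraph concedes. Proposing to ``construct explicit $S_n$-invariant elements $\alpha_k,\beta_k$ generating the invariant subring'' is not a reduction but a restatement of essentially the full difficulty of the theorem; no mechanism is given for producing such generators, and a naive symmetric-function ansatz would require reproving the core of \cite{FutHar2014} in this twisted setting.

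The paper circumvents exactly this obstruction by ordering the invariants differently. First it takes invariants under the larger abelian group $A(m,1,n)$, whose fixed subalgebra is a $q^m$-quantum affine space in $x_i^m,y_i$ on which $S_n$ acts by the \emph{standard} permutation action, so that \cite[Thm.~3.10]{FutHar2014} applies verbatim and, crucially, in the strong form that the isomorphism $f$ can be normalized so that $f(x_1)=x_1x_2\cdots x_n$. The passage from $G(m,1,n)$ down to $G(m,p,n)$ is then handled separately: $G(m,1,n)/G(m,p,n)\simeq\boldsymbol{\mu}_p$, the $G(m,p,n)$-invariants decompose as the free module $\bigoplus_{k=0}^{p-1}\F_q(\un{x},\un{y})^{G(m,1,n)}(x_1\cdots x_n)^{km/p}$ (an eigenspace decomposition for a generator of $\boldsymbol{\mu}_p$), and adjoining $(x_1\cdots x_n)^{m/p}$ corresponds, via the normalized $f$, to adjoining a $p$-th root of $x_1$ on the quantum Weyl field side, which changes one parameter from $q^m$ to $q^{m/p}$. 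To complete your argument you would either need to supply an independent proof of the $S_n$-invariance statement for your twisted action, or reorganize your two stages along these lines so that the known $S_n$ result is applied where the action is standard and the index-$p$ discrepancy is absorbed by a cyclic root adjunction.
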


\begin{Remark}
The special case of Theorem \ref{thm:I} when $G(m,p,n)$ is a Weyl group of classical type was proved in \cite{FutHar2014}.
\end{Remark}

\begin{Remark}
The isomorphism \eqref{eq:main1} is a $q$-deformation \eqref{eq:commutative-example} for $G=G(m,p,n)$.
\end{Remark}

\begin{Remark}
Let $A_1^q(\F)$ denote the \emph{quantum Weyl algebra}, defined as the algebra with generators $x,y$ and defining relation $yx-qxy=1$. It is easy to prove that $A_1^q(\F)$ and $\F_q[x,y]$ have isomorphic division rings of fractions.
Therefore, the quantum plane may be replaced by the quantum Weyl algebra in \eqref{eq:main1}, which yields
\begin{equation}\label{eq:main1-Weyl}
\left(\Frac A_1^q(\F)^{\otimes n}\right)^{G(m,p,n)}\simeq \Frac \left(A_1^{q^{m/p}}(\F)\otimes A_1^{q^m}(\F)^{\otimes {(n-1)}}\right).
\end{equation}
Let $A_1(\F)$ denote the Weyl algebra with generators $x,y$ and defining relation $yx-xy=1$.
Then the isomorphism \eqref{eq:main1-Weyl} can be thought of as a $q$-deformation of the isomorphism
\begin{equation}
\left(\Frac A_1(\F)^{\otimes n}\right)^{G(m,p,n)}\simeq \Frac A_1(\F)^{\otimes n},
\end{equation}
which was established recently by Eshmatov et al. \cite[Thm.~1(a)]{EshFutOvsFer2015}, in fact with $G(m,p,n)$ replaced by an arbitrary finite complex reflection group. In the quantum case it is unclear how to define an action of an exceptional complex reflection group on a tensor power of the quantum plane, see Problem \ref{prb:Open-Problem1} at the end of this paper.
\end{Remark}

\subsection{Quantum OGZ Algebras}
The matrix elements in the Gelfand-Tsetlin bases for finite-dimensional irreducible $\Fgl_n$-modules
\cite{GelTse1950}\cite{Molev2007} give rise to an embedding of the enveloping algebra $U(\Fgl_n)$ into the $G$-invariants of a skew group algebra $L\ast\mathcal{M}$ where $L$ is a field of rational functions, $\mathcal{M}$ is a free abelian group acting by additive shifts on $L$, and $G$ is the direct product of symmetric groups $S_1\times S_2\times\cdots\times S_n$. The notion of a \emph{Galois ring} \cite{FutOvs2010} is an axiomatization of this situation.

In \cite{Mazorchuk1999}, Mazorchuk used a faithful generic Gelfand-Tsetlin $U(\Fgl_n)$-module (see  \cite{Mazorchuk2001a}) as the starting point for a generalization of $U(\Fgl_n)$. The resulting family of algebras are called \emph{Orthogonal Gelfand-Zetlin (OGZ) Algebras}, denoted $U(\boldsymbol{r})$, where $\boldsymbol{r}=(r_1,r_2,\ldots,r_n)\in\N^n$ is the \emph{signature}. Intuitively, $r_k$ is the number of boxes in the $k$:th row of a Gelfand-Tsetlin type tableaux. Taking $\boldsymbol{r}=(1,2,\ldots,n)$ one recovers an algebra isomorphic to $U(\Fgl_n)$.
There are also OGZ algebras of type BD, defined in \cite[Sec.~8]{Mazorchuk2001b}.

Quantum analogs of the Gelfand-Tsetlin bases were constructed in \cite{UenTakShi1990} and of Gelfand-Tsetlin modules in \cite{MazTur2000}. Thus, it is natural to ask for a quantum analog of OGZ algebras. This idea was briefly mentioned as a possibility in \cite[Rem.~3.7]{MazPonTur2003}, but no algebras were defined.

In Section \ref{sec:QuantumOGZAlgebras-Def} we propose a definition of quantum OGZ algebras (of type $A$). We denote them by $U_q^{m,p}(\boldsymbol{r})$ where $(m,p)\in\N^2$, $p|m$ and $\boldsymbol{r}$ is as before. The integers $m$ and $p$ are related to the choice of notion of $q$-number and also to the parameters of a complex reflection group. Following \cite{Mazorchuk1999}, we define these algebras as algebras of operators acting on an infinite-dimensional vector space. We motivate our definition by an informal process of quantization that we describe. A new feature is the necessity to include a certain prefactor not present in the classical case, to ensure that the formulas have the correct symmetry. We also give examples including $U_q(\Fgl_n)$ and quantized Heisenberg algebras, which show that this definition is sensible.

Our second main theorem is Theorem \ref{thm:Galois-Ring-Realization}, in which we prove that quantum OGZ algebras are examples of Galois rings. On the one hand, this gives an alternative way to define quantum OGZ algebras, which in our mind is preferable to the operator algebra approach. On the other hand it gives the possibility of studying the structure and representation theory for quantum OGZ algebras using Galois rings. The final part of the paper illustrates the latter point.

\subsection{The Quantum Gelfand-Kirillov Conjecture}
A \emph{quantum Weyl field} is the division ring of fractions of a tensor product of quantum planes parametrized by powers of $q$ (see Definition \ref{dfn:Quantum-Weyl-Field}).
An Ore domain $A_q$ depending on $q$ is said to satisfy the \emph{quantum Gelfand-Kirillov conjecture}  if $\Frac A_q$ is isomorphic to a quantum Weyl field over a purely transcendental field extension of the ground field. For $A_q=U_q(\Fn)$ where $\Fn$ is the nilpotent radical of a Borel subalgebra $\Fb$ of a semi-simple Lie algebra $\Fg$, this was first conjectured by Feigin in a talk at RIMS in 1992 and independently proved for generic $q$ by Alev-Dumas \cite{AleDum1994}, Iohara-Malikov \cite{IohMal1994} and Joseph \cite{Joseph1995}.
Since then, many other algebras in the area of quantum groups have been proved to have this property, including quantized function algebras \cite{Caldero2000},
$U_q(\Fb)$ and generalizations \cite{Panov2001}, quantum Schubert cells and CGL extensions \cite{Cauchon2003}, $U_q(\Fsl_n)$ \cite{Fauquant-Millet1999}, $U_q(\Fgl_n)$ with explicit parameters of the quantum Weyl field \cite{FutHar2014}.

Our third and final main result is an explicit description of the division ring of fractions of quantum OGZ algebras, which shows that they also satisfy the quantum Gelfand-Kirillov conjecture.
\begin{Theorem}\label{thm:III} 
Let $n\in\N$, $\boldsymbol{r}=(r_1,r_2,\ldots,r_n)\in\N^n$, $(m,p)\in\N^2$ with $p\big|m$, and
$q\in\F$ be nonzero and not a root of unity.
The quantum OGZ algebra $U_q^{m,p}(\boldsymbol{r})$ satisfies the quantum Gelfand-Kirillov conjecture. Explicitly, there is an isomorphism of $\F$-algebras
\begin{equation}
\Frac\left(U_q^{m,p}(\boldsymbol{r})\right)\simeq \K_{\bar{q}}(\un{x},\un{y})
\end{equation}
where $\bar{q}=(q^{m/p},q^{m/p},\ldots,q^{m/p},q^m,q^m,\ldots,q^m)$ with $q^{m/p}$ appearing $n-1$ times and $q^m$ appearing $r_1+r_2+\cdots+r_{n-1}-(n-1)$ times, and $\K$ is a purely transcendental field extension of $\F$ of degree $r_n$.
\end{Theorem}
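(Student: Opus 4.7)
The strategy is to combine the two previous main theorems. Theorem \ref{thm:Galois-Ring-Realization} realizes $U_q^{m,p}(\boldsymbol{r})$ as a Galois ring in the sense of Futorny-Ovsienko, whose symmetry group is a direct product $G = \prod_{k=1}^{n-1} G(m,p,r_k)$ of complex reflection groups, while Theorem \ref{thm:I} computes the $G(m,p,r_k)$-invariants of the fraction field of an $r_k$-fold tensor power of quantum planes. Once one checks that, in this setting, passing to the division ring of fractions commutes with taking $G$-invariants, the result follows from a row-by-row application of Theorem \ref{thm:I}.

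In more detail, let $L = \F(\la_{k,i} \mid 1\le k\le n,\ 1\le i\le r_k)$ be the field of rational functions in Gelfand-Tsetlin-type variables, let $\mathcal{M}$ be the free abelian group on generators $\de_{k,i}$ with $1\le k\le n-1$, $1\le i\le r_k$ acting by additive shifts on the $\la_{k,i}$ (the top row $k=n$ being pointwise fixed by $\mathcal{M}$), and let $G$ act factor-wise through its natural reflection representation on each row, together with the appropriate twist of the shift operators built into Theorem \ref{thm:Galois-Ring-Realization}. Because the inclusion $U_q^{m,p}(\boldsymbol{r}) \hookrightarrow (L\ast\mathcal{M})^G$ promotes, after verifying the relevant Harish-Chandra-type finiteness, to a Galois order structure, the general theory yields
\begin{equation*}
\Frac U_q^{m,p}(\boldsymbol{r}) \simeq \bigl(\Frac(L\ast\mathcal{M})\bigr)^G.
\end{equation*}

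The skew field $\Frac(L\ast\mathcal{M})$ factors cleanly along the rows. The top row contributes a commutative, $G$-invariant subfield $\K := \F(\la_{n,1},\ldots,\la_{n,r_n})$, which is purely transcendental of degree $r_n$ over $\F$. For each $k \in \iv{1}{n-1}$, the variables $\la_{k,i}$ together with the corresponding shift generators $\de_{k,i}$ form a factor isomorphic to $\Frac \F_{q^m}[x,y]^{\otimes r_k}$, via the standard identification sending $\de_{k,i}$ to one quantum-plane generator and an exponential of $\la_{k,i}$ to the other. On this $k$-th factor, $G$ acts only through $G(m,p,r_k)$, and precisely via the natural action on $\F_{q^m}[x,y]^{\otimes r_k}$ studied in Theorem \ref{thm:I}. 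Applying that theorem with quantization parameter $q^m$ in place of $q$ identifies the invariants with $\Frac\bigl(\F_{q^{m/p}}[x,y] \otimes \F_{q^m}[x,y]^{\otimes (r_k-1)}\bigr)$. Combining the contributions of all $n-1$ rows and adjoining $\K$ produces a quantum Weyl field over $\K$ with parameter $q^{m/p}$ appearing $n-1$ times (once per row) and $q^m$ appearing $\sum_{k=1}^{n-1}(r_k-1) = r_1+\cdots+r_{n-1}-(n-1)$ times, which is exactly the stated $\K_{\bar q}(\un{x},\un{y})$.

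The main obstacle is the careful verification that the $G$-action on each row's contribution, as inherited from the Galois realization of Theorem \ref{thm:Galois-Ring-Realization}, agrees on the nose with the natural action of $G(m,p,r_k)$ on a tensor power of the quantum plane with commutation parameter $q^m$. This is precisely the reason for the prefactor introduced in the definition of $U_q^{m,p}(\boldsymbol{r})$: it shifts the commutation parameter from the naive $q$ to the required $q^m$, while the diagonal root-of-unity part of $G(m,p,r_k)$ is what produces the mixed parameter $q^{m/p}$ through Theorem \ref{thm:I}. A secondary technical point is confirming the Galois-order conditions so that the fraction-field equality above is available; this can be done along the lines of the classical Mazorchuk arguments adapted using the Galois-order framework of \cite{FutOvs2010}.
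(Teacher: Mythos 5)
Your overall architecture is the same as the paper's (Galois-ring realization, commuting $\Frac$ with $G$-invariants for finite $G$, row-by-row reduction to Theorem \ref{thm:I}), but the parameter bookkeeping in the middle is wrong. In the skew group algebra of Theorem \ref{thm:Galois-Ring-Realization} the defining relation is $\de^{ki}x_{kj}=q^{-\delta_{ij}}x_{kj}\de^{ki}$, so via \eqref{eq:Laurent-Isomorphism} each row factor $\La_k\ast\mathcal{M}_k$ is a quantum Laurent polynomial algebra at parameter $q$ (not $q^m$), carrying the natural $G(m,p,r_k)$-action \eqref{eq:G-action-on-xy}. You instead identify the $k$-th row with $\Frac\,\F_{q^m}[x,y]^{\otimes r_k}$ and then apply Theorem \ref{thm:I} ``with $q^m$ in place of $q$''; carried out literally this produces parameters $(q^m)^{m/p}=q^{m^2/p}$ and $(q^m)^m=q^{m^2}$, not the required $q^{m/p}$ and $q^m$. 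Moreover, the only way to make the row commutation parameter equal to $q^m$ is to take $x_{ki}^m$ (an $m$-th power of your ``exponential'') as the quantum-plane generator, but then the diagonal subgroup $A(m,p,r_k)$ acts trivially on it, so the action is no longer the natural one to which Theorem \ref{thm:I} applies. The correct route is the paper's: keep parameter $q$ and the action \eqref{eq:G-action-on-xy}; the $q^m$ arises \emph{inside} the proof of Theorem \ref{thm:I} (Step 1, $x_i\mapsto x_i^m$) and the single $q^{m/p}$ from adjoining $(x_1\cdots x_{r_k})^{m/p}$. Your explanation of the prefactor $x_{ki}^{-\frac{m}{p}(r_{k\pm1}-r_k)}$ as ``shifting the commutation parameter from $q$ to $q^m$'' is also incorrect: it does not affect the $\de$--$x$ commutation at all; its sole purpose is to make $X_k^\pm$ invariant under the root-of-unity part of $G$ (see the proof of Theorem \ref{thm:Galois-Ring-Realization}).

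A second, smaller inconsistency: the symmetry group of the Galois realization is $G=G(m,p,r_1)\times\cdots\times G(m,p,r_n)$, including the factor acting on the top row; with your truncated $G=\prod_{k\le n-1}G(m,p,r_k)$ the field $K=\Frac\Ga$ is not the fixed field of $G$ in $L$, and the identity $\Frac\,U\simeq\bigl(\Frac(L\ast\mathcal{M})\bigr)^G$ you invoke is not available. Consequently $\K$ is not the full top-row field $\F(\la_{n,1},\ldots,\la_{n,r_n})$ but the invariant subfield $\Frac\bigl(\La_n^{G_n}\bigr)$; it is still purely transcendental of degree $r_n$ (Chevalley--Shephard--Todd), so the final statement survives, but as written your identification clashes with the Galois-ring step. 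Finally, no Galois-order or Harish-Chandra-type finiteness verification is needed: the Galois ring property \eqref{eq:GaloisRingProperty} already gives $\Frac\,U\simeq\Frac\bigl((L\ast\mathcal{M})^G\bigr)$, and commuting $\Frac$ with invariants uses only that $G$ is finite.
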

\begin{Remark}
In the special case when $\boldsymbol{r}=(1,2,\ldots,n)$ and $(m,p)=(2,2)$ we recover the result for $U_q(\Fgl_n)$ from \cite{FutHar2014}, in view of Example \ref{ex:Uqgln}.
\end{Remark}

\section{The $q$-Difference Noether Problem for Complex Reflection Groups}
\label{sec:q-DifferenceNoether}

Let $\bar q=(q_1,q_2,\ldots,q_n)$ where for each $i\in\iv{1}{n}$ we have $q_i=q^{k_i}$ for some $k_i\in\Z$.
Let $\F_{\bar q}[\un{x},\un{y}]$ be the algebra generated by $\{x_i\}_{i\in\iv{1}{n}}\cup\{y_i\}_{i\in\iv{1}{n}}$ modulo the relations 
\begin{equation}
x_ix_j=x_jx_i,\quad y_iy_j=y_jy_i,\quad y_ix_j=q_i^{\delta_{ij}}x_jy_i
\end{equation}
for all $i,j\in\iv{1}{n}$. If $k_1=k_2=\cdots=k_n=1$ we write $q$ instead of $\bar q$.
It is well-known that $\F_{\bar q}[\un{x},\un{y}]$ is an Ore domain, and thus has a division ring of fractions denoted by $\F_{\bar q}(\un{x},\un{y})=\Frac(\F_{\bar q}[\un{x},\un{y}])$.
\begin{Definition}[Quantum Weyl Field]\label{dfn:Quantum-Weyl-Field}
A division ring is a \emph{quantum Weyl field over $\F$} if it is isomorphic to 
$\F_{\bar q}(\un{x},\un{y})$ for some $\bar q$.
\end{Definition}

The \emph{$q$-Difference Noether Problem} for a finite group $G$ acting by $\F$-algebra automorphisms on the division ring $\F_q(\un{x},\un{y})$ asks if the invariant subring $\F_q(\un{x},\un{y})^G$ is a quantum Weyl field.
\begin{Problem}[The $q$-Difference Noether Problem for $G$]
Do there exist $(k_1,k_2,\ldots,k_n)\in\Z^n$ and an $\F$-algebra isomorphism
\begin{equation}
\F_q(\un{x},\un{y})^G \simeq \F_{\bar q}(\un{x},\un{y}),
\end{equation}
where $\bar q=(q^{k_1},q^{k_2},\ldots,q^{k_n})$?
\end{Problem}

In this section we give a positive solution to this problem for all complex reflection groups $G(m,p,n)$, acting in a natural way on $\F_q(\un{x},\un{y})$, in the case when $q$ is not a root of unity.

Let $\boldsymbol{\mu}_m$ denote the group of all $m$:th roots of unity in $\F$.
Let $A(m,p,n)$ be the subgroup of $(\boldsymbol{\mu}_m)^n$ consisting of all $\al=(\al_1,\al_2,\ldots,\al_n)\in(\boldsymbol{\mu}_m)^n$ with $(\al_1\al_2\cdots\al_n)^{m/p}=1$.
The symmetric group $S_n$ acts naturally on $A(m,p,n)$ by permuting the entries $\al_i$.
By definition, 
\begin{equation}\label{eq:G(m,p,n)-Definition}
G(m,p,n)=S_n\ltimes A(m,p,n).
\end{equation}
The group $G(m,p,n)$ acts by $\F$-algebra automorphisms on $\F_q[\un{x},\un{y}]$, hence on the quantum Weyl field $\F_q(\un{x},\un{y})$ through
\begin{subequations}\label{eq:G-action-on-xy}
\begin{align}
g(x_i) &= \al_i x_{\si(i)} \\ 
g(y_i) &= y_{\si(i)} 
\end{align}
\end{subequations}
for all $g=(\si,\al)\in G(m,p,n)$ and all $i\in\iv{1}{n}$.

\begin{Remark}
Changing the $G(m,p,n)$-action on $y_i$ to $g(y_i) = \al_i y_{\si(i)}$ is equivalent to the above via a change of generators $x_i\mapsto x_i$, $y_i\mapsto x_iy_i$.
\end{Remark}

We now prove Theorem \ref{thm:I}, which gives a positive solution to the $q$-difference Noether problem for the reflection groups $G(m,p,n)$. The idea of the proof is to reduce to the special case $G(1,1,n)=S_n$, which was established in \cite{FutHar2014}.
\begin{proof}[Proof of Theorem \ref{thm:I}]
\un{Step 1}: There is an isomorphism
\begin{equation}\label{eq:q-Noether1}
\F_{q^m}(\un{x},\un{y})^{G(1,1,n)} \simeq \F_q(\un{x},\un{y})^{G(m,1,n)}.
\end{equation}
To see this, first note that there is an isomorphism 
\begin{equation}\label{eq:q-Noether1-proof}
 \F_{q^m}[\un{x},\un{y}] \longrightarrow \F_q[\un{x},\un{y}]^{A(m,1,n)}
\end{equation}
given by $x_i\mapsto x_i^m$ and $y_i\mapsto y_i$. Taking division ring of fractions on both sides of \eqref{eq:q-Noether1-proof}, and then taking invariants on both sides with respect to $G(1,1,n)$, we obtain \eqref{eq:q-Noether1}, using that $G(m,1,n)=G(1,1,n)\ltimes A(m,1,n)$.

\un{Step 2}: The algebra $\F_q(\un{x},\un{y})^{G(m,p,n)}$ is free of rank $p$ as a left module over $\F_q(\un{x},\un{y})^{G(m,1,n)}$, with basis $\big\{(x_1x_2\cdots x_n)^{km/p}\mid k\in\iv{0}{p-1}\big\}$. That is, we have
\begin{equation}\label{eq:q-Noether2}
\F_q(\un{x},\un{y})^{G(m,p,n)}=\bigoplus_{k=0}^{p-1} \F_q(\un{x},\un{y})^{G(m,1,n)} (x_1x_2\cdots x_n)^{km/p}.
\end{equation}
To see this, first observe that 
 $G(m,1,n)\ni(\si,\al)\mapsto \prod_i \al_i\in \boldsymbol{\mu}_m$ composed with the canonical projection $\boldsymbol{\mu}_m\to\boldsymbol{\mu}_p$ induces an isomorphism
\[G(m,1,n)/G(m,p,n)\simeq\boldsymbol{\mu}_p.\]
Let $\al=(\al_1,\al_2,\ldots,\al_n)\in (\boldsymbol{\mu}_m)^n$ be a representative for a generator of $G(m,1,n)/G(m,p,n)$.
Then,
\begin{equation} \label{eq:q-Noether-zeta}
\al\left((x_1x_2\ldots x_n)^{m/p}\right)=\ep_p(x_1x_2\cdots x_n)^{m/p},
\end{equation}
where $\ep_p=(\al_1\al_2\cdots\al_n)^{m/p} $ is a primitive $p$:th root of unity.
It is now straightforward to see that \eqref{eq:q-Noether2} is nothing but the eigenspace decomposition of $\al$ acting on $\F_q(\un{x},\un{y})^{G(m,p,n)}$. Indeed, by \eqref{eq:q-Noether-zeta}, the $k$:th term in the right hand side of \eqref{eq:q-Noether2} is clearly contained in the eigenspace of $\al$ with eigenvalue $\ep_p^k$. Since $\al$ has order $p$ there can be no other eigenvalues than $\ep_p^k$ for $k\in\iv{0}{p-1}$. Conversely, if $f$ is any element of $\F_q(\un{x},\un{y})^{G(m,p,n)}$ such that $\al(f)=\ep_p^k f$, then $f\cdot (x_1x_2\cdots x_n)^{-km/p}$ is fixed by $\al$, hence fixed by $G(m,1,n)$ and therefore belongs to $\F_q(\un{x},\un{y})^{G(m,1,n)}$. Then $f\in \F_q(\un{x},\un{y})^{G(m,1,n)}(x_1x_2\cdots x_n)^{km/p}$.

\un{Step 3}:
By \cite[Thm.~3.10]{FutHar2014} there exists an isomorphism
\begin{equation}
f:\F_q(\un{x},\un{y})\longrightarrow \F_q(\un{x},\un{y})^{G(1,1,n)}
\end{equation}
such that 
\begin{equation}
f(x_1)=x_1x_2\cdots x_n.
\end{equation}
Composing $f$, with $q$ replaced by $q^m$, with the isomorphism \eqref{eq:q-Noether1}, we obtain an isomorphism
\[g: \F_{q^m}(\un{x},\un{y}) \longrightarrow  \F_q(\un{x},\un{y})^{G(m,1,n)} 
\]
satisfying
\[g(x_1)=(x_1x_2\cdots x_n)^m.\]
Now we adjoin to $\F_{q^m}(\un{x},\un{y})$ the $p$:th root of $x_1$. The resulting division ring is isomorphic to $\F_{\bar q}(\un{x},\un{y})$ where $\bar q=(q^{m/p},q^m,q^m,\ldots,q^m)$.
 Via $g$ this corresponds to adjoining $(x_1x_2\cdots x_n)^{m/p}$ to $\F_q(\un{x},\un{y})^{G(m,1,n)}$. By \eqref{eq:q-Noether2}, the resulting algebra is isomorphic to $\F_q(\un{x},\un{y})^{G(m,p,n)}$.
\end{proof}

\section{Quantum OGZ Algebras and their Galois Ring Realization}

\subsection{Galois Rings}
Galois rings were introduced by Futorny and Ovsienko in \cite{FutOvs2010}. We briefly recall their definition and state a result we will use.

Let $\Ga$ be an integral domain, $K$ its field of fractions, $K\subseteq L$ a finite Galois extension with Galois group $G$, $\mathcal{M}$ a monoid acting by automorphisms on $L$.
We assume that the action of $\mathcal{M}$ on $L$ is \emph{$K$-separating}:
\[\text{If $h,h'\in\mathcal{M}$ satisfy $h(a)=h'(a)$ for all $a\in K$ then $h=h'$.}\]
In particular $L$ is a faithful $\mathcal{M}$-module and we may regard $\mathcal{M}\subseteq \Aut(L)$.
Assume $G$ acts by conjugation on $\mathcal{M}$, and hence naturally by automorphisms on the skew monoid ring $\mathcal{L}=L\ast\mathcal{M}$. Let $\mathcal{K}=\mathcal{L}^G$ be the subring of invariants.

\begin{Definition}[Galois $\Ga$-Ring \cite{FutOvs2010}] \label{def:GaloisRing}
A $\Ga$-subring $U\subseteq \mathcal{K}$ is a \emph{Galois $\Ga$-ring} if the following 
property holds:
\begin{equation}
UK=\mathcal{K}=KU.
\end{equation}
\end{Definition}
We will use the following result from \cite{FutOvs2010} (see also \cite[Prop.~5.2]{FutHar2014} for a more detailed proof)
which gives a sufficient criterion for a subring $U\subseteq \mathcal{K}$ to be a Galois $\Ga$-ring.
For $a=\sum_{\varphi\in\mathcal{M}} a_\varphi\varphi\in L\ast\mathcal{M}$, the \emph{support} of $a$  is defined as $\Supp(a)=\{\varphi\in\mathcal{M}\mid a_{\varphi}\neq 0\}$.
\begin{Proposition}[{\cite[Prop.~4.1(1)]{FutOvs2010}}]
\label{prp:Galois-Ring-Sufficient-Condition}
Let $U$ be a $\Ga$-subring of $\mathcal{K}$ generated as a $\Ga$-ring by $u_1,u_2,\ldots,u_n\in\mathcal{K}$. If $\cup_{i=1}^n\Supp(u_i)$ generates $\mathcal{M}$ as a monoid, then $U$ is a Galois $\Gamma$-ring in $\mathcal{K}$.
\end{Proposition}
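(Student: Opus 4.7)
\emph{Reduction to orbit invariants.} Since $K\subseteq L^G\subseteq\mathcal{K}$ and $U\subseteq\mathcal{K}$, the inclusions $KU\subseteq\mathcal{K}$ and $UK\subseteq\mathcal{K}$ are automatic, so the real task is to prove the reverse inclusions; by the obvious left/right symmetry I focus on $\mathcal{K}\subseteq KU$. Any element $v\in\mathcal{K}=(L\ast\mathcal{M})^G$ has coefficients $a_\varphi\in L$ satisfying the $G$-equivariance $a_{g\varphi g^{-1}}=g(a_\varphi)$, and thus decomposes as a finite sum of orbit invariants $\pi(\varphi_0,a):=\sum_{g\in G/\Stab_G(\varphi_0)} g(a)\cdot g\varphi_0 g^{-1}$, one for each $G$-orbit meeting $\Supp(v)$, where $\varphi_0$ is an orbit representative and $a\in L^{\Stab_G(\varphi_0)}$. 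It therefore suffices to show every such orbit invariant lies in $KU$.

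\emph{Producing invariants in $U$ with the right support.} By the generating hypothesis, for every $\varphi_0\in\mathcal{M}$ there is a factorization $\varphi_0=\psi_{j_1}\psi_{j_2}\cdots\psi_{j_r}$ with $\psi_{j_\ell}\in\Supp(u_{i_\ell})$, and then the product $w=u_{i_1}u_{i_2}\cdots u_{i_r}$ lies in $U$. Expanding $w$ inside $L\ast\mathcal{M}$, its support is a finite $G$-saturated subset of $\mathcal{M}$ containing $G\cdot\varphi_0$, with a specific $\varphi_0$-coefficient computed from the coefficients of the $u_{i_\ell}$'s and the $\mathcal{M}$-twists that arise from the successive multiplications.

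\emph{Induction on support, and the main obstacle.} Fix a well-ordering on $G$-orbits in $\mathcal{M}$ refining the containment order on $G$-saturated finite subsets, and induct on the largest orbit in $\Supp(v)$. For the inductive step I would build a $K$-linear combination $z\in KU$ of products $w$ as above (varying both the factorization of $\varphi_0$ and the index sequences $i_1,\ldots,i_r$) that matches the target orbit invariant $\pi(\varphi_0,a)$ on the orbit $G\cdot\varphi_0$; the difference $v-z$ then lies in $\mathcal{K}$ with strictly smaller support and is in $KU$ by the induction hypothesis. The main obstacle is the coefficient-matching step: left-multiplication of a single $w$ by $c\in K$ merely scales its $\varphi_0$-coefficient by $c$, so one fixed $w$ cannot in general realize an arbitrary element of $L^{\Stab_G(\varphi_0)}$. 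One must verify that, as the factorization and the index sequence vary, the $\varphi_0$-coefficients of the resulting products $w$ span $L^{\Stab_G(\varphi_0)}$ over $K$. This is precisely where the $K$-separating hypothesis on the $\mathcal{M}$-action enters, ensuring that the $\mathcal{M}$-twists appearing in the various products act distinctly enough on $L$ to prevent the span from collapsing into a proper $K$-subspace.
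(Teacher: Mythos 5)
Your plan correctly identifies the orbit decomposition of $\mathcal{K}=(L\ast\mathcal{M})^G$ and the idea of reaching a given $\varphi_0$ by multiplying generators along a factorization in $\mathcal{M}$, but the argument has genuine gaps precisely at the points that carry the weight of the proof (note the paper itself does not prove this statement; it cites \cite{FutOvs2010}, with a detailed proof in \cite[Prop.~5.2]{FutHar2014}). First, the claim that $\Supp(u_{i_1}\cdots u_{i_r})$ contains $G\cdot\varphi_0$ is unjustified: the $\varphi_0$-coefficient of the product is a sum over all factorizations of $\varphi_0$ through the supports and can cancel. Second, the coefficient-matching step, which you yourself flag as ``the main obstacle,'' is exactly the core of the proposition and is left unproved: working only with left multiplication by $K$, the orbit component $\mathcal{K}_{G\varphi_0}\simeq L^{\Stab_G(\varphi_0)}$ is a $K$-space typically of dimension $>1$ over what a single product can reach, and you give no mechanism forcing the $\varphi_0$-coefficients of your products to span it over $K$; ``$K$-separating ensures the span does not collapse'' is an assertion, not an argument. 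Third, the induction is not well-founded as set up: your correcting element $z$ is built from products whose supports generally contain orbits outside $\Supp(v)$ and not below $G\cdot\varphi_0$ in your ordering, so $v-z$ need not have strictly smaller support and the process need not terminate.

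The cited proof avoids all three problems by proving the stronger statement $LU=L\ast\mathcal{M}$ first, i.e.\ working with \emph{left $L$-coefficients}, and only then descending to $K$. Since $\Ga\subseteq U$, the $L$-subspace $LU$ is stable under right multiplication by $\Ga$; given $a\in LU$ and $\varphi_0\in\Supp(a)$, the $K$-separating hypothesis gives $\gamma\in\Ga$ whose values $\varphi(\gamma)$, $\varphi\in\Supp(a)$, are pairwise distinct (the agreement loci are proper $\F$-subspaces of $\Ga$ and $\F$ is infinite), and a Vandermonde (Dedekind-independence) argument, legitimate because $L$-coefficients are now allowed, shows $L\varphi_0\subseteq LU$. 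One then inducts on word length: if $\varphi\in LU$ and $\psi\in\Supp(u_i)$, then $\varphi u_i\in LU$ has support exactly $\varphi\,\Supp(u_i)$ with nonzero coefficients (a single skew monomial times $u_i$ — no cancellation can occur, unlike in your all-at-once product), so $\varphi\psi\in LU$; since the supports generate $\mathcal{M}$, this gives $LU=L\ast\mathcal{M}$. The return from $L$ to $K$ is then a short averaging argument, not a spanning argument: if $v\in\mathcal{K}$ and $v=\sum_j l_j u_j'$ with $l_j\in L$, $u_j'\in U$, then applying $g\in G$ and using $g(v)=v$, $g(u_j')=u_j'$ and averaging yields $v=\sum_j\bigl(|G|^{-1}\sum_{g\in G}g(l_j)\bigr)u_j'\in KU$, and symmetrically for $UK$. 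I recommend restructuring your proof along these lines; the detour through $LU$ plus $G$-averaging is what replaces the direct $K$-coefficient matching that your sketch cannot supply.
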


\subsection{Definition of Quantum OGZ Algebras}\label{sec:QuantumOGZAlgebras-Def}
Let $n\in\N$, and $\boldsymbol{r}=(r_1,r_2,\ldots,r_n)\in\N^n$. Let $(m,p)\in\N^2$ with $p\big|m$.
Let $q\in\F$ be nonzero and not a root of unity.

Let $\Lambda=\F\big[x_{ki}^{\pm 1}\mid i\in\iv{1}{r_k},k\in\iv{1}{n}\big]$ be a Laurent polynomial algebra in $|\boldsymbol{r}|=r_1+r_2+\cdots+r_n$ variables, and $L$ be the field of fractions of $\Lambda$.
Let $G_{r_k}$ be the complex reflection group $G(m,p,r_k)$
(see \eqref{eq:G(m,p,n)-Definition} for the definition) and put
 $G=G_{r_1}\times G_{r_2}\times\cdots\times G_{r_n}$.
The group $G$ acts naturally on $\Lambda$, hence on $L$, by
\begin{equation}
g(x_{ki})=\al_{ki}x_{k\si_k(i)}
\end{equation}
for 
\begin{equation}\label{eq:g-element}
g=(\si_1\al_1,\si_2\al_2,\ldots,\si_n\al_n)\in G, 
\quad \si_k\in S_{r_k},
\quad \al_k=(\al_{k1},\al_{k2},\ldots,\al_{kr_k})\in A(m,p,r_k).
\end{equation}
Define $\Ga=\La^G$, the subalgebra of $G$-invariants in $\La$.
By the Chevalley-Shephard-Todd Theorem \cite{SheTod1954}\cite{Broue2010}, $\Ga$ is a semi-Laurent polynomial algebra in the generators
\begin{subequations}\label{eq:Gamma-Generators}
\begin{gather}
\label{eq:Gamma-Generators1}
\ga_{kd}=e_{kd}(x_{k1}^m,x_{k2}^m,\ldots,x_{kr_k}^m), \quad d\in\iv{1}{r_k-1},\quad k\in\iv{1}{n}, \\
\label{eq:Gamma-Generators2}
\ga_{kr_k}^{\pm 1}=(x_{k1}x_{k2}\cdots x_{kr_k})^{\pm m/p},\quad k\in\iv{1}{n}.
\end{gather}
\end{subequations}
where $e_{kd}$ denotes the elementary symmetric polynomial of degree $d$ in $r_k$ variables.

Let $\hat{\mathcal{M}}=\Z^{|\boldsymbol{r}|}$, written multiplicatively, with $\Z$-basis $\big\{\delta^{ki}\mid k\in\iv{1}{n}, i\in\iv{1}{r_k}\big\}$, and $\mathcal{M}\simeq\Z^{r_1+r_2+\cdots+ r_{n-1}}$ be the subgroup generated by $\delta^{ki}$ for $i\in\iv{1}{r_k}$,
$k\in\iv{1}{n-1}$.

Let $\mathscr{V}$ be the skew group algebra $L\ast \hat{\mathcal{M}}$, where $\delta^{ki}x_{lj}=q^{-\delta_{kl}\delta_{ij}}x_{lj}\delta^{ki}$.
In particular $\mathscr{V}$ is a left $L$-vector space with basis $\{v_\la\}$ indexed by $\hat{\mathcal{M}}$:
\begin{equation}
\mathscr{V}=\bigoplus_{\la\in\hat{\mathcal{M}}} L v_\la
\end{equation}
For $\ga\in \Ga$, let ${}^L\ga\in\End_\F(\mathscr{V})$ be the operator of left multiplication by $\ga$. For $k\in\iv{1}{n-1}$, define the operators ${}^LX_k^{\pm}\in\End_\F(\mathscr{V})$ to be the left multiplication by
\begin{equation}
X_k^{\pm} = \sum_{i=1}^{r_k} (\delta^{ki})^{\pm 1}\cdot A_{ki}^{\pm} \in L\ast \mathcal{M}\subseteq L\ast\hat{\mathcal{M}},
\end{equation}
where
\begin{equation}\label{eq:A_ki-Definition}
A_{ki}^{\pm}=\mp x_{ki}^{-\frac{m}{p}(r_{k\pm 1}-r_k)}\frac{\prod_j \left\{ \left(x_{k\pm 1,j}x_{ki}^{-1}\right)^{-m/p} \left((x_{k\pm 1,j}x_{ki}^{-1})^m-1\right)/\left(q^{-m/p}(q^m-1)\right)\right\}}{\prod_{j\neq i} \left\{\left(x_{kj}x_{ki}^{-1}\right)^{-m/p} \left((x_{kj}x_{ki}^{-1})^m - 1\right)/\left(q^{-m/p}(q^m-1)\right)\right\}}.
\end{equation}
The products are taken over all $j$ for which the expressions are defined. Thus in the numerator $j$ runs over $\iv{1}{r_{k\pm 1}}$ and in the denominator $j$ runs over $\iv{1}{r_k}\setminus\{i\}$ and  $r_0=0$ by convention.

We make the following definition, inspired by the classical case considered in \cite{Mazorchuk1999}.
\begin{Definition}[Quantum OGZ Algebra]
The \emph{$(m,p)$-form of the Quantum OGZ Algebra of signature $\boldsymbol{r}$}, denoted $U_q^{m,p}(\boldsymbol{r})$, is defined as the $\F$-subalgebra of $\End_{\F}(\mathscr{V})$ generated
by ${}^L\ga$ for $\ga\in\Ga$ and ${}^LX_k^{\pm}$ for $k\in\iv{1}{n-1}$.
\end{Definition}

Some comments are in order. The fraction in \eqref{eq:A_ki-Definition} can be informally obtained from the corresponding classical expression $\mp\frac{\prod_j (t_{k\pm 1,j}-t_{ki})}{\prod_{j\neq i}(t_{kj}-t_{ki})}$ by replacing the differences $t_{ab}-t_{cd}$ with their $q$-analog $[t_{ab}-t_{cd}]_q^{m,p}$ where we use what we call the \emph{$(m,p)$-form $q$-number}
\begin{equation}\label{eq:mpq-numbers}
[x]_q^{m,p}=\frac{q^{-xm/p}(q^{xm}-1)}{q^{-m/p}(q^m-1)}
\end{equation}
and then  replacing $q^{t_{ki}}$ by $x_{ki}$.
The prefactor $x_{ki}^{-\frac{m}{p}(r_{k\pm 1}-r_k)}$ in \eqref{eq:A_ki-Definition} has no classical counterpart (more precisely, the classical limit is $1$) but is needed in the quantum case to ensure that the elements $X_k^\pm$ are $G$-invariant relative to the natural action
\eqref{eq:G-action-on-L-ast-M} (see the proof of Theorem \ref{thm:Galois-Ring-Realization} for details). 
The reason we choose to work with $q$-numbers of the form \eqref{eq:mpq-numbers} is two-fold. On the one hand it allows us to encapsulate several different flavors of the quantum group $U_q(\mathfrak{gl}_n)$ and the quantized Heisenberg algebra (see Section \ref{sec:Examples} for examples). On the other hand, in this setup the natural ``Weyl group of type $A_{n-1}$'' turns out to be precisely the complex reflection group $G(m,p,n)$, which allows us to employ Theorem \ref{thm:I} to calculate the division ring of fractions of $U_q^{m,p}(\boldsymbol{r})$.

\subsection{Examples}\label{sec:Examples}
Before continuing, we provide some examples of quantum OGZ algebras.

\begin{Example}[Quantized $\mathfrak{gl}_n$] \label{ex:Uqgln}
Let $\boldsymbol{r}=(1,2,\ldots,n)$ and $(m,p)=(2,2)$. Then $U_q^{m,p}(\boldsymbol{r})\simeq U_q(\mathfrak{gl}_n)$, the quantized enveloping algebra of $\mathfrak{gl}_n$, defined as the algebra with generators $E_i^+, E_i^-, K_j, K_j^{-1}$ for $i\in\iv{1}{n-1}$, $j\in\iv{1}{n}$ and defining relations
\begin{subequations}
\begin{align}
K_iK_i^{-1}&=K_i^{-1}K_i=1, & K_iK_j&=K_jK_i,\\
\label{eq:q-Chevalley}
K_iE_j^\pm K_i^{-1}&=q^{\pm (\delta_{ij}-\delta_{i,j+1})} E_j^\pm, &
[E_i^+,E_j^-]&=\delta_{ij}\frac{K_i/K_{i+1}-(K_i/K_{i+1})^{-1}}{q-q^{-1}},
\end{align}
\begin{equation}
\label{eq:q-Serre}
(E_i^\pm)^2E_j^\pm - [2]_q E_i^\pm E_j^\pm E_i^\pm + E_j^\pm (E_i^\pm)^2=0, \quad |i-j|=1,
\end{equation}
\end{subequations}
where $[2]_q=(q^2-q^{-2})/(q-q^{-1})=q+q^{-1}$.
This follows from Theorem \ref{thm:Galois-Ring-Realization} below, combined with \cite[Prop.~5.9]{FutHar2014}.
Note that the correct Weyl group $W$ for $U_q(\mathfrak{gl}_n)$ (in the sense of the image of the quantum Harish-Chandra isomorphism being the $W$-invariants of the torus) is the complex reflection group $G(2,2,n)$ (the Weyl group of type $D_n$).
\end{Example}

\begin{Example}[Simply Connected Form of Quantized $\mathfrak{gl}_n$]
Let $\boldsymbol{r}=(1,2,\ldots,n)$, $m=4$, $p=2$. Then $U_q^{m,p}(\boldsymbol{r})$ is isomorphic to the so called \emph{simply connected} quantum group $\breve{U}_q(\mathfrak{gl}_n)$ (see \cite[Sec.~7.3.1]{KliSch1997}, although their $q^{1/2}$ is our $q$), obtained by changing the notion of $q$-number from $[a]_q=\frac{q^a-q^{-a}}{q-q^{-1}}$ to $[a]_q^{4,2}=[a]_{q^2}=\frac{q^{2a}-q^{-2a}}{q^2-q^{-2}}$.
Thus the right hand side of the second relation in \eqref{eq:q-Chevalley} should be replaced by
\begin{equation}
\delta_{ij}\frac{(K_i/K_{i+1})^2-(K_i/K_{i+1})^{-2}}{q^2-q^{-2}},
\end{equation}
while in \eqref{eq:q-Serre}, $[2]_q$ should be replaced by $[2]_{q^2}=q^2+q^{-2}$. The other relations stay the same.
Here the correct Weyl group is $G(4,2,n)$.
\end{Example}

\begin{Example}[The $(m,p)$-Form of Quantized $\mathfrak{gl}_n$]
Generalizing the above, we let $\boldsymbol{r}=(1,2,\ldots,n)$, $(m,p)\in\N$ with $p\big|m$. Then $U_q^{m,p}(\boldsymbol{r})$ is isomorphic to the algebra $U_q^{m,p}(\mathfrak{gl}_n)$ obtained from the definition of $U_q(\mathfrak{gl}_n)$ by using the $(m,p)$-form $q$-number \eqref{eq:mpq-numbers}.
Thus the definition of $U_q^{m,p}(\mathfrak{gl}_n)$ is the same as that of $U_q(\mathfrak{gl}_n)$ except that the right hand side of the second relation in \eqref{eq:q-Chevalley} should be replaced by
\begin{equation}
\delta_{ij}\frac{(K_i/K_{i+1})^{-m/p}\left((K_i/K_{i+1})^m-1\right)}{q^{-m/p}(q^m-1)},
\end{equation}
and in \eqref{eq:q-Serre}, $[2]_q$ should be replaced by $[2]_q^{m,p}=\frac{q^{-2m/p}(q^{2m}-1)}{q^{-m/p}(q^m-1)}=q^{-m/p}(q^m+1)$. The other relations stay the same.
The Weyl group in this case is the complex reflection group $G(m,p,n)$.
\end{Example}

\begin{Example}[Extended Quantized Heisenberg Algebra]
Let $\boldsymbol{r}=(1,1)$ and $(m,p)=(2,2)$. Then one can check that $U_q^{m,p}(\boldsymbol{r})$ is isomorphic to the Laurent polynomial algebra $\mathcal{H}_q[L,L^{-1}]$ where $\mathcal{H}_q$ is the \emph{quantized Heisenberg algebra} with generators $X,Y,K^{\pm 1}$ and relations
\begin{gather}
KK^{-1}=K^{-1}K=1,\\
YX=\frac{K-K^{-1}}{q-q^{-1}},\qquad 
XY=\frac{qK-q^{-1}K^{-1}}{q-q^{-1}},\\
KXK^{-1}=qX, \qquad 
KYK^{-1}=q^{-1}Y.
\end{gather}
This example can also be generalized to arbitrary $(m,p)\in\N^2$, $p|m$, which leads to an $(m,p)$-form of the quantized Heisenberg algebra, denoted $\mathcal{H}_q^{m,p}$. 
\end{Example}

\subsection{Galois Ring Realization of \texorpdfstring{$U_q^{m,p}(\boldsymbol{r})$}{Quantum OGZ Algebras}}

In this subsection we prove that quantum OGZ algebras can be realized as Galois $\Ga$-rings. Keeping the notation from Section \ref{sec:QuantumOGZAlgebras-Def}, 
define an action of $G$ on $L\ast \mathcal{M}$ by $\F$-algebra automorphisms as follows:
\begin{subequations}\label{eq:G-action-on-L-ast-M}
\begin{align}
\label{eq:G-action-on-x}
g(x_{ki}) &=\al_{ki}x_{k\si_k(i)}, \\ 
\label{eq:G-action-on-delta}
g(\de^{ki}) &=\de^{k\si_k(i)},
\end{align}
\end{subequations}
with $g$ is as in \eqref{eq:g-element}.
It is straightforward to check that this is indeed an action by algebra automorphisms.
Note that the action of  $G$ on  $\mathcal{M}$ is the conjugation action.

\begin{Lemma} The following two statements hold.
\begin{enumerate}[{\rm (a)}]
\item The action of $\mathcal{M}$ on $L$ is $K$-separating.
\item $L/K$ is a Galois extension, and $G=\Gal(L/K)$.
\end{enumerate}
\end{Lemma}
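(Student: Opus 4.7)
For part (a), the plan is to observe that since $\mathcal{M}$ is a group, the $K$-separating condition is equivalent to the subgroup of $\mathcal{M}$ fixing $K$ pointwise being trivial. A general element $h=\prod_{k,i}(\de^{ki})^{\la_{ki}}$ (with $k\in\iv{1}{n-1}$, $i\in\iv{1}{r_k}$, $\la_{ki}\in\Z$) scales $x_{ki}$ by $q^{-\la_{ki}}$ and fixes each $x_{nj}$. If $h$ fixes every element of $K$, it must in particular fix each generator of $\Ga$ in \eqref{eq:Gamma-Generators}. For $k\in\iv{1}{n-1}$ with $r_k\ge 2$, applying $h$ to $\ga_{k1}=\sum_i x_{ki}^m$ and using the linear independence of the monomials $x_{ki}^m$ over $\F$ forces $q^{-m\la_{ki}}=1$ for every $i$; for $k$ with $r_k=1$, applying $h$ to $\ga_{k1}=x_{k1}^{m/p}$ forces $q^{-(m/p)\la_{k1}}=1$. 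Since $q$ is not a root of unity, every $\la_{ki}$ must vanish, so $h$ is trivial, proving (a).

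For part (b), the plan is to verify two preliminaries and then invoke Artin's theorem. First, I would check that $G$ acts faithfully on $L$: for a nontrivial $g=(\si_k\al_k)_k$, either some $\si_k\ne 1$, in which case $g$ sends some $x_{ki}$ to a scalar multiple of a distinct variable, or else every $\si_k=1$ but some $\al_{ki}\ne 1$, in which case $g(x_{ki})=\al_{ki}x_{ki}\ne x_{ki}$; in either case $g$ does not act as the identity on $L$. Second, one needs $L^G=K$: the inclusion $K\subseteq L^G$ is immediate from $\Ga=\La^G$. For the reverse, given $f/g\in L^G$ with $f,g\in\La$, multiply numerator and denominator by $\prod_{h\ne 1}h(g)$ to rewrite $f/g=f'/g'$ with $g'=\prod_{h\in G}h(g)\in\Ga$; since $f'/g'$ and $g'$ are both $G$-invariant, so is $f'$, and hence $f/g\in\Frac(\Ga)=K$. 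Artin's theorem then yields that $L/K$ is Galois with $\Gal(L/K)=G$.

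The main (but still mild) obstacle is the equality $L^G=K$, since a priori $K$ is only defined as $\Frac(\La^G)$; however, the standard denominator-symmetrization trick described above disposes of it. With this in hand, part (b) is formal from Artin's theorem, and part (a) reduces to the straightforward coefficient computation exploiting that $q$ is not a root of unity.
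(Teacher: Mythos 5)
Your argument is correct. For part (a) you follow essentially the paper's own route: the paper applies $h$ to $\ga_{k1}=x_{k1}^m+\cdots+x_{kr_k}^m$ and uses that the resulting coefficients $q^{-mh_{ki}}$ determine $h$ because $q$ is not a root of unity; your version first reduces, via the group structure of $\mathcal{M}$, to showing the pointwise stabilizer of $K$ is trivial, and your explicit case split for $r_k=1$ (where $\ga_{k1}=x_{k1}^{\pm m/p}$ is the only generator) is a small refinement the paper glosses over -- though note the paper's element $x_{k1}^m$ still lies in $K$ in that case, so both work. For part (b) the paper offers no argument at all, only a citation to Brou\'e; you instead give a complete elementary proof: faithfulness of the $G$-action on $L$, the identity $L^G=\Frac(\La^G)=K$ via the standard symmetrization of denominators $f/g=\bigl(f\prod_{h\neq 1}h(g)\bigr)/\prod_{h\in G}h(g)$, and then Artin's theorem. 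This is a sound and self-contained substitute for the reference, and each step (in particular that $\prod_{h\in G}h(g)\in\Ga$ and that $G$-invariance of the fraction and denominator forces $G$-invariance of the numerator) checks out; the only thing it buys over the citation is transparency, at the cost of a few extra lines.
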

\begin{proof}
(a) For $h\in\mathcal{M}$ write $h=\prod (\delta^{ki})^{h_{ki}}$ where $h_{ki}\in\Z$. Let $k\in\iv{1}{n}$ and consider the element $a=\ga_{k1}=x_{k1}^m+x_{k2}^m+\cdots +x_{kr_k}^m\in K$. Then
\[h(a)=q^{-mh_{k1}}x_{k1}^m+q^{-mh_{k2}}x_{k2}^m+\cdots+q^{-mh_{kr_k}}x_{kr_k}^m.\]
Since $q$ is not a root of unity, the exponents are uniquely determined by $h$.

(b) See for example \cite[Ch.~3]{Broue2010}.
\end{proof}

\begin{Theorem}[Galois Ring Realization of $U_q^{m,p}(\boldsymbol{r})$]
\label{thm:Galois-Ring-Realization}
Let $n\in\N$, $\boldsymbol{r}=(r_1,r_2,\ldots,r_n)\in\N^n$, $(m,p)\in\N^2$ with $p\big|m$, and
$q\in\F$ be nonzero and not a root of unity. Put $U=U_q^{m,p}(\boldsymbol{r})$.
Then there exists an injective $\F$-algebra homomorphism
\begin{subequations}
\begin{equation}
\ph:U\longrightarrow (L\ast \mathcal{M})^G,
\end{equation}
given by
\begin{alignat}{2}
\ph({}^LX_k^\pm) &= X_k^{\pm},&&\qquad k\in\iv{1}{n-1},\\
\ph({}^L\ga) &=\ga,&&\qquad \ga\in\Ga.
\end{alignat}
\end{subequations}
Moreover, 
\begin{equation}\label{eq:GaloisRingProperty}
K\ph(U)=\ph(U)K =(L\ast\mathcal{M})^G.
\end{equation}
In other words, $U$ is isomorphic to a Galois $\Ga$-ring in $(L\ast\mathcal{M})^G$.
\end{Theorem}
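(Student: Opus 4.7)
My plan is to break the proof into three steps: verifying $G$-invariance of the generators (Step~1), producing $\ph$ from the left regular representation (Step~2), and establishing the Galois property via Proposition \ref{prp:Galois-Ring-Sufficient-Condition} (Step~3). The main obstacle is Step~1: the invariance of $X_k^\pm$ under the abelian normal subgroup $\prod_{k=1}^{n} A(m,p,r_k)$ of $G$ is the nontrivial content and hinges on a delicate cancellation between the prefactor exponent in \eqref{eq:A_ki-Definition} and the defining constraint of $A(m,p,r_k)$.

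For Step~1, each $\ga \in \Ga$ lies in $L^G \subseteq (L \ast \mathcal{M})^G$ by definition, and $\Supp(X_k^\pm) \subseteq \mathcal{M}$ is built into the formula, so the only real task is to check $G$-invariance of $X_k^\pm$. Since the action on $\delta^{ki}$ in \eqref{eq:G-action-on-delta} is just the permutation $\si_k$ with no root-of-unity factor, it suffices to verify that $\si_k(A_{ki}^\pm) = A_{k\si_k(i)}^\pm$ (a transparent reindexing of the products over $j$) and that $A_{ki}^\pm$ is fixed by every $\al \in A(m,p,r_k) \times A(m,p,r_{k\pm 1})$. The latter I would compute directly: using $\al_{ki}^m = 1$ together with the constraints $(\prod_j \al_{k\pm 1,j})^{m/p} = (\prod_j \al_{kj})^{m/p} = 1$, the numerator of the fraction in \eqref{eq:A_ki-Definition} scales by $\al_{ki}^{(m/p)r_{k\pm 1}}$, the denominator by $\al_{ki}^{(m/p)r_k}$, and the prefactor by $\al_{ki}^{-(m/p)(r_{k\pm 1}-r_k)}$, for a total scaling of $1$.

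Step~2 is essentially formal. The skew group algebra $\mathscr{V} = L \ast \hat{\mathcal{M}}$ is free of rank one as a left module over itself, so the left regular representation gives an injective $\F$-algebra embedding $\rho : L \ast \hat{\mathcal{M}} \hookrightarrow \End_\F(\mathscr{V})$. By construction, $U = U_q^{m,p}(\boldsymbol{r})$ is exactly the image under $\rho$ of the $\F$-subalgebra of $L \ast \hat{\mathcal{M}}$ generated by $\Ga \cup \{X_k^\pm\}_{k=1}^{n-1}$. Hence $\ph := \rho^{-1}\big|_U$ is a well-defined injective $\F$-algebra homomorphism matching the prescribed values on generators, and by Step~1 its image lies in $(L \ast \mathcal{M})^G$.

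In Step~3 I will invoke Proposition \ref{prp:Galois-Ring-Sufficient-Condition}. Each coefficient $A_{ki}^\pm$ is a nonzero element of $L$, so $\Supp(X_k^\pm) = \{(\delta^{ki})^{\pm 1} \mid i \in \iv{1}{r_k}\}$. Taking the union over $k \in \iv{1}{n-1}$ and over both signs recovers the distinguished $\Z$-basis $\{\delta^{ki}\}$ of $\mathcal{M}$ together with its inverses, which generates $\mathcal{M} \simeq \Z^{r_1+\cdots+r_{n-1}}$ as a monoid. The proposition then delivers \eqref{eq:GaloisRingProperty}, completing the proof.
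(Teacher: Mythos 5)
Your proposal is correct and takes essentially the same route as the paper: inverting the left regular representation on $\mathscr{V}$ to obtain the embedding $\ph$, reducing $G$-invariance of $X_k^\pm$ to the identity $g(A_{ki}^\pm)=A_{k\si_k(i)}^\pm$ for the coefficients, and invoking Proposition \ref{prp:Galois-Ring-Sufficient-Condition} since the supports $\{(\delta^{ki})^{\pm 1}\}$ generate $\mathcal{M}$ as a monoid. The only difference is cosmetic: the paper verifies invariance of $A_{ki}^\pm$ by regrouping so that the prefactor combines with $\prod_j x_{k\pm 1,j}^{-m/p}$ and $\prod_j x_{kj}^{-m/p}$ into $\Ga$-invariant elements from \eqref{eq:Gamma-Generators2}, whereas you track the root-of-unity scaling of numerator, denominator and prefactor directly, and your exponent bookkeeping (total scaling $\al_{ki}^{0}=1$ using the constraints of $A(m,p,r_k)$ and $A(m,p,r_{k\pm 1})$) is correct.
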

\begin{proof}
Let $U'$ denote the subalgebra of $L\ast\mathcal{M}$ generated by $X_k^\pm$ and $\Ga$.
Let $\psi:U'\to U$ denote the homomorphism sending $X_k^\pm$ to ${}^LX_k^\pm$ and $\ga$ to ${}^L\ga$.
Thus $\psi$ is just the left regular representation of $U'$.
By definition of $U$, the map $\psi$ is surjective.
Since $U'$ is unital, $\psi$ is injective.
Thus $\ph=\psi^{-1}$ is an injective homomorphism from $U$ to $L\ast\mathcal{M}$.
We must show that the image, $U'$, of $\ph$ is contained in the invariant subalgebra $(L\ast\mathcal{M})^G$.
Since $\Ga=\La^G\subseteq L^G$, clearly $\Ga\subseteq (L\ast\mathcal{M})^G$.
Let $g=(\si_1\al_1,\si_2\al_2,\ldots,\si_n\al_n)\in G$ and $k\in\iv{1}{n-1}$. Then we have
\[g(X_k^\pm)=\sum_{j=1}^{r_k} g\left((\delta^{kj})^{\pm 1}\right) g(A_{kj}^\pm) = \sum_{j=1}^{r_k} \delta^{k\si_k(j)} g(A_{kj}^\pm).\]
Thus we must show that 
\begin{equation}\label{eq:A_ki-Covariance}
g(A_{ki}^\pm)=A_{k\si_k(i)}^\pm \quad\text{for all $i\in\iv{1}{r_k}$.}
\end{equation}
First rewrite $A_{ki}^\pm$ as follows:
\begin{equation}
A_{ki}^\pm = \mp \frac{\prod_j x_{k\pm 1,j}^{-m/p} \cdot\prod_j \left\{(x_{k\pm 1,j}^mx_{ki}^{-m}-1)/\left(q^{-m/p}(q^m-1)\right)\right\}}{\prod_j x_{kj}^{-m/p}\cdot \prod_{j\neq i} \left\{(x_{kj}^mx_{ki}^{-m}-1)/\left(q^{-m/p}(q^m-1)\right)\right\} }.
\end{equation}
By direct inspection, the products involving curly brackets are mapped under $g$ to the same expressions with $i$ replaced by $\si_k(i)$. The other two products belong to $\Ga$ by \eqref{eq:Gamma-Generators2}, and hence are $G$-invariant. This proves \eqref{eq:A_ki-Covariance}.
Finally, the identities \eqref{eq:GaloisRingProperty} follow from Proposition \ref{prp:Galois-Ring-Sufficient-Condition} and that the union of the support of the generators $X_k^\pm$ is equal to $\{\delta^{ki},(\delta^{ki})^{-1}\mid i\in\iv{1}{r_k}, k\in\iv{1}{n-1}\}$ which generates $\mathcal{M}$ as a monoid.
\end{proof}

\section{The Quantum Gelfand-Kirillov Conjecture for \texorpdfstring{$U_q^{m,p}(\boldsymbol{r})$}{Quantum OGZ Algebras}}
We will apply the Galois ring realization of quantum OGZ algebras (Theorem \ref{thm:Galois-Ring-Realization}) and the positive solution to the $q$-difference Noether problem (Theorem \ref{thm:I}) to give a proof of Theorem \ref{thm:III} describing the division ring of fractions of $U_q^{m,p}(\boldsymbol{r})$.

Let $\mathcal{M}_k$ be the subgroup of $\mathcal{M}$ generated by $\delta^{ki}$, $i\in\iv{1}{r_k}$, and $\La_k=\F\left[x_{ki}^{\pm 1}\mid i\in\iv{1}{r_k}\right]$.
Let $P_k=\F_q[x_1^{\pm 1},x_2^{\pm 1},\ldots,x_{r_k}^{\pm 1};y_1^{\pm 1},y_2^{\pm 1},\ldots,y_{r_k}^{\pm 1}]$ be the \emph{quantum Laurent polynomial algebra}, generated by $x_i^{\pm 1}, y_i^{\pm 1}$ for $i\in\iv{1}{r_k}$ with defining relations
\begin{subequations}\label{eq:Quantum-Laurent-Relations}
\begin{gather}
x_ix_i^{-1}=x_i^{-1}x_i=y_iy_i^{-1}=y_i^{-1}y_i=1,\\
x_ix_j=x_jx_i,\qquad y_iy_j=y_jy_i, \qquad y_ix_j=q^{\delta_{ij}}x_jy_i.
\end{gather}
\end{subequations}
Recall that $G_k=G(m,p,r_k)$. The group $G_k$ acts by automorphisms on $P_k$ as in \eqref{eq:G-action-on-xy}.

\begin{Lemma} There is a $G_k$-equivariant $\F$-algebra isomorphism
\begin{equation}\label{eq:Laurent-Isomorphism}
\psi: \F_q[x_1^{\pm 1},x_2^{\pm 1},\ldots,x_{r_k}^{\pm 1};y_1^{\pm 1},y_2^{\pm 1},\ldots,y_{r_k}^{\pm 1}]
\overset{\sim}{\longrightarrow}  \Lambda_k\ast \mathcal{M}_k 
\end{equation}
given by
\begin{equation}
\psi(y_i)=(\delta^{ki})^{-1},\qquad 
\psi(x_i)=x_{ki}.
\end{equation}
\end{Lemma}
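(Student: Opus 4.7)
The plan is to check this in three standard steps: well-definedness, bijectivity, and equivariance. None of the steps should present serious difficulty, since the setup has been engineered so that the correspondence is essentially tautological once the conventions are unwound; the only place one has to be careful is the sign conventions for the $\delta^{ki}$ exponents, which explains why $\psi(y_i) = (\delta^{ki})^{-1}$ rather than $\delta^{ki}$.

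First I would show that $\psi$ is a well-defined $\F$-algebra homomorphism. Using the relations in the smash product $\Lambda_k \ast \mathcal{M}_k$ introduced in Section \ref{sec:QuantumOGZAlgebras-Def}, I have $\delta^{ki} x_{kj} = q^{-\delta_{ij}} x_{kj} \delta^{ki}$, which rearranges to $(\delta^{ki})^{-1} x_{kj} = q^{\delta_{ij}} x_{kj} (\delta^{ki})^{-1}$. Thus $\psi(y_i)\psi(x_j) = (\delta^{ki})^{-1} x_{kj} = q^{\delta_{ij}} x_{kj}(\delta^{ki})^{-1} = q^{\delta_{ij}} \psi(x_j)\psi(y_i)$, matching the defining relation $y_i x_j = q^{\delta_{ij}} x_j y_i$ of $P_k$. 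The commutativity relations among the $x_i$'s (and among the $y_i$'s) are immediate, since the $x_{kj}$'s commute in $\Lambda_k$ and the $\delta^{ki}$'s commute in the abelian group $\mathcal{M}_k$. The invertibility of the images of $x_i$ and $y_i$ is automatic. By the universal property of $P_k$, this extends uniquely to a homomorphism $\psi$.

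Next I would establish bijectivity by a basis comparison. The quantum Laurent polynomial algebra $P_k$ admits an $\F$-basis of monomials $\{x^{\un{a}} y^{\un{b}} \mid \un{a}, \un{b}\in\Z^{r_k}\}$; on the other side, $\Lambda_k\ast\mathcal{M}_k$ is a free left $\Lambda_k$-module on $\mathcal{M}_k$, and $\Lambda_k$ itself has an $\F$-basis $\{x^{\un{a}}\mid \un{a}\in\Z^{r_k}\}$, so $\{x_k^{\un{a}}(\delta^k)^{\un{c}}\mid \un{a},\un{c}\in\Z^{r_k}\}$ is an $\F$-basis of $\Lambda_k\ast\mathcal{M}_k$. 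The homomorphism $\psi$ sends the first basis bijectively onto the second (up to reindexing $\un{b}\mapsto -\un{b}$), hence is an $\F$-linear isomorphism.

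Finally I would verify $G_k$-equivariance on generators. For $g=(\si_k,\al_k)\in G_k = G(m,p,r_k)$, the action on $P_k$ given in \eqref{eq:G-action-on-xy} reads $g(x_i)=\al_{ki}x_{\si_k(i)}$ and $g(y_i)=y_{\si_k(i)}$, while the action on $\Lambda_k\ast\mathcal{M}_k$ inherited from \eqref{eq:G-action-on-L-ast-M} satisfies $g(x_{ki})=\al_{ki}x_{k\si_k(i)}$ and $g(\delta^{ki})=\delta^{k\si_k(i)}$. Therefore
\begin{equation}
\psi(g(x_i))=\al_{ki}x_{k\si_k(i)}=g(\psi(x_i)),\qquad
\psi(g(y_i))=(\delta^{k\si_k(i)})^{-1}=g(\psi(y_i)),
\end{equation}
so $\psi$ intertwines the two $G_k$-actions. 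Note the crucial compatibility that no $\al_{ki}$ appears in $g(\delta^{ki})$ and likewise no scalar appears in $g(y_i)$; this was the reason the $y_i$-action in \eqref{eq:G-action-on-xy} was defined without the $\al$-twist. This completes the proof; the ``hard part,'' to the extent there is one, was already resolved by the foresight in choosing the sign of the exponent so that the mismatched sign in the commutation relation $\de^{ki}x_{kj}=q^{-\de_{ij}}x_{kj}\de^{ki}$ is corrected.
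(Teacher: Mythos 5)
Your proposal is correct and follows essentially the same route as the paper's (very terse) proof: check the defining relations are preserved, with the inverse on $(\delta^{ki})^{-1}$ compensating the sign in $\delta^{ki}x_{kj}=q^{-\delta_{ij}}x_{kj}\delta^{ki}$, use invertibility of the generators (you via a basis comparison, the paper via the evident inverse map) to get bijectivity, and verify $G_k$-equivariance on generators. No gaps; your write-up simply supplies the details the paper declares ``immediate.''
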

\begin{proof} That $\psi$ is an isomorphism is immediate, using \eqref{eq:Quantum-Laurent-Relations}, that $x_{ki}$ and $\delta^{ki}$ are invertible in $\La_k\ast\mathcal{M}_k$ and satisfy
\[x_{ki}x_{kj}=x_{kj}x_{ki},\qquad \delta^{ki}\delta^{kj}=\delta^{kj}\delta^{ki},
\qquad \delta^{ki}x_{kj} = q^{-\delta_{ij}}x_{kj}\delta^{ki}.\]
Moreover $\psi$ clearly intertwines the $G_k$-actions given in \eqref{eq:G-action-on-L-ast-M} and \eqref{eq:G-action-on-xy}. 
\end{proof}

We are now ready to prove the third and final main theorem.

\begin{proof}[Proof of Theorem \ref{thm:III}]
The proof is similar to \cite[Sec.~6]{FutHar2014} but we provide some more details for clarity.
By \eqref{eq:GaloisRingProperty}, the map $\varphi$ induces an isomorphism
\[\Frac\left(U_q^{m,p}(\boldsymbol{r})\right)  \simeq
\Frac\left((L\ast\mathcal{M})^G\right).  \]
Since $G$ is finite, any fraction is equivalent to a fraction with $G$-invariant denominator, hence
\[\Frac\left((L\ast\mathcal{M})^G\right) \simeq 
\left(\Frac(L\ast\mathcal{M})\right)^G. \]
Since $L=\Frac\La$,
\begin{equation}\label{eq:qGK-proof-step0}
\left(\Frac(L\ast\mathcal{M})\right)^G \simeq 
\left(\Frac(\La\ast\mathcal{M})\right)^G. 
\end{equation}
Since $\delta^{ki}$ fixes $x_{lj}$ if $l\neq k$ and $\mathcal{M}$ is generated by $\delta^{ki}$ for $k\in\iv{1}{n-1}$, $i\in\iv{1}{r_k}$, the right hand side of \eqref{eq:qGK-proof-step0} is isomorphic to
\[
\left(\Frac\big( (\La_1\ast \mathcal{M}_1)\otimes(\La_2\ast\mathcal{M}_2)\otimes\cdots\otimes 
(\La_{n-1}\ast\mathcal{M}_{n-1})\otimes\La_n\big)\right)^G.
\] 
Using again that $G$ is finite, and is a direct product of groups $G_k$ acting trivially on $\delta^{li}$ and $x_{li}$ if $l\neq k$, the last division ring is isomorphic to
\begin{equation}\label{eq:qGK-proof-step}
\Frac\left( (\La_1\ast \mathcal{M}_1)^{G_1}\otimes(\La_2\ast\mathcal{M}_2)^{G_2}\otimes\cdots\otimes 
(\La_{n-1}\ast\mathcal{M}_{n-1})^{G_{n-1}}\otimes(\La_n)^{G_n}\right). 
\end{equation}
By the isomorphism \eqref{eq:Laurent-Isomorphism},
 $(\La_k\ast\mathcal{M}_k)^{G_k}$ is isomorphic to  $\F_q[x_1^{\pm 1},x_2^{\pm 1},\ldots,x_{r_k}^{\pm 1}; y_1^{\pm 1},y_2^{\pm 1},\ldots,y_{r_k}^{\pm 1}]^{G_k}$, which in turn has the same division ring of fractions as $\F_q[\un{x},\un{y}]^{G_k}$.
 (Here the number of pairs of variables $x_i,y_i$ is suppressed from the notation, but can be deduced from the notation $G_k$ to be $r_k$.) Hence \eqref{eq:qGK-proof-step} is isomorphic to
\[\Frac\left( \F_q[\un{x},\un{y}]^{G_1}\otimes \F_q[\un{x},\un{y}]^{G_2}\otimes\cdots\otimes 
\F_q[\un{x},\un{y}]^{G_{n-1}}\otimes(\La_n)^{G_n}\right).\]
By Theorem \ref{thm:I}, the latter is isomorphic to
\begin{equation}\label{eq:qGK-proof-step2}
\Frac\left( \F_{\bar q_1}[\un{x},\un{y}] \otimes \F_{\bar q_2}[\un{x},\un{y}]\otimes\cdots\otimes 
\F_{\bar q_{n-1}}[\un{x},\un{y}]\otimes\Ga_n\right),
\end{equation}
where $\bar{q}_k=(q^{m/p},q^m,q^m,\ldots,q^m)\in\F^{r_k}$ for $k\in\iv{1}{n-1}$,
and we put $\Ga_n=(\La_n)^{G_n}$. Thus, letting $\K=\Frac \Ga_n$, \eqref{eq:qGK-proof-step2} is isomorphic to 
$\K_{\bar q}(\un{x},\un{y})$
where $\bar q$ consists of $n-1$ components of $q^{m/p}$ and $|\boldsymbol{r}|-(r_n+n-1)$ components of $q^m$.
By the Chevalley-Shephard-Todd theorem, $\K$ is a purely transcendental field extension of $\F$ of degree $r_n$.
\end{proof}

\section{Open Problem}

We end by stating an open problem.

\begin{Problem}\label{prb:Open-Problem1}
If $G$ is an exceptional complex reflection group, does it act naturally by algebra automorphisms on a suitable tensor power of the quantum plane? If so, is there a positive solution to the $q$-Difference Noether Problem for such $G$?
\end{Problem}

\bibliographystyle{siam}

\begin{thebibliography}{999}


\bibitem{AleDum1994}
J. Alev, F. Dumas,
\emph{Sur le corps des fractions de certaines algèbres quantiques}. (French. English summary) [On the quotient field of some quantum algebras] 
J. Algebra 170 (1994), no. 1, 229--265. 

\bibitem{Artin1997}
M. Artin,
\emph{Some problems on three-dimensional graded domains}, Representation theory and algebraic geometry (Waltham, MA, 1995), 1--19, 
London Math. Soc. Lecture Note Ser., 238, Cambridge Univ. Press, Cambridge, 1997. 

\bibitem{Broue2010}
M. Brou\'{e},
\emph{Introduction to complex reflection groups and their braid groups},
Lecture Notes in Mathematics, 1988. Springer-Verlag, Berlin, 2010. xii+138 pp. ISBN: 978-3-642-11174-7 

\bibitem{Caldero2000}
P. Caldero,
\emph{On the Gelfand-Kirillov conjecture for quantum algebras}.
Proc. Amer. Math. Soc. 128 (2000), no. 4, 943--951. 

\bibitem{Cauchon2003}
G. Cauchon, 
\emph{Effacement des d\'{e}rivations et spectres premiers des alg\`{e}bres quantiques.} (French) [Derivative elimination and prime spectra of quantum algebras] J. Algebra 260 (2003), no. 2, 476--518.

\bibitem{Colin1996}
A. Colin,
\emph{A Simple Constructive Proof of a Theorem of Mattuck} Preprint (1996).
Available at 
\url{http://citeseerx.ist.psu.edu/viewdoc/summary?doi=10.1.1.45.6350}

\bibitem{Dumas2010}
F. Dumas,
\emph{Non-commutative Invariants},
Trabajos de Matematica, Serie ``B'' No 60/2011.
Universidad Nacional de C\'{o}rdoba,
Facultad de Matem\'{a}tica, Astronom\'{i}a y F\'{i}sica.
\url{http://math.univ-bpclermont.fr/%7Efdumas/fichiers/B-Mat60.pdf}

\bibitem{EshFutOvsFer2015}
F. Eshmatov, V. Futorny, S. Ovsienko, J. Fernando Schwarz,
\emph{Noncommutative Noether's problem for complex reflection groups}, \href{arXiv:1505.05626 [math.RA]}{http://arxiv.org/abs/1505.05626}

\bibitem{Fauquant-Millet1999}
F. Fauquant-Millet,
\emph{Quantification de la localisation de Dixmier de $U(sl_{n+1}(\C))$}. (French) [Quantization of the Dixmier localization of $U(sl_{n+1}(\C))$] J. Algebra 218 (1999), no. 1, 93--116.

\bibitem{FutHar2014}
V. Futorny, J. T. Hartwig,
\emph{Solution to a $q$-difference Noether problem and the quantum Gelfand-Kirillov conjecture
for $\mathfrak{gl}_N$},
Math. Z. 276 (2014), no. 1--2, 1--37.

\bibitem{FutOvs2010}
V. Futorny, S. Ovsienko,
\emph{Galois orders in skew monoid rings},
J. Algebra 324 (2010) 598--630.

\bibitem{GelTse1950}
I. M. Gelfand, M. L. Cetlin,
\emph{Finite-dimensional representations of the group of unimodular matrices}.
(Russian) Doklady Akad. Nauk SSSR (N.S.) 71, (1950) 825--828.

\bibitem{Humphreys1990}
J. R. Humphreys,
\emph{Reflection groups and Coxeter groups},
Cambridge Studies in Advanced Mathematics, 29. Cambridge University Press, Cambridge, 1990. xii+204 pp. ISBN: 0-521-37510-X

\bibitem{IohMal1994}
K. Iohara, F. Malikov,
\emph{Rings of skew polynomials and Gelfand-Kirillov conjecture for quantum groups.} 
Comm. Math. Phys. 164 (1994), no. 2, 217--237. 

\bibitem{Joseph1995}
A. Joseph, 
\emph{Sur une conjecture de Feigin.} (French) [On a conjecture of Feigin] C. R. Acad. Sci. Paris Sér. I Math. 320 (1995), no. 12, 1441–1444. (Reviewer: Martin P. Holland) 17B37

\bibitem{KliSch1997}
A. Klimyk, K. Schm\"{u}dgen,
\emph{Quantum groups and their representations},
Texts and Monographs in Physics. Springer-Verlag, Berlin, 1997. xx+552 pp. ISBN: 3-540-63452-5

\bibitem{Mattuck1968}
A. Mattuck,
\emph{The field of multisymmetric functions},
Proc. Amer. Math. Soc. 19 (1968), 764--765.

\bibitem{Mazorchuk1999}
V. Mazorchuk,
\emph{Orthogonal Gelfand-Zetlin algebras, I},
Beitr. Algebra Geom. 40 (1999), no. 2, 399--415.

\bibitem{Mazorchuk2001a}
V. Mazorchuk,
\emph{On categories of Gelfand-Zetlin modules} Noncommutative structures in mathematics and physics (Kiev, 2000), 299--307, 
NATO Sci. Ser. II Math. Phys. Chem., 22, Kluwer Acad. Publ., Dordrecht, 2001. 

\bibitem{Mazorchuk2001b}
V. Mazorchuk,
\emph{On Gelfand-Zetlin modules over orthogonal Lie algebras},
Algebra Colloq. 8 (2001), no. 3, 345--360.

\bibitem{MazPonTur2003}
V. Mazorchuk,
M. Ponomarenko,
L. Turowska,
\emph{Some associative algebras related to $U(\Fg)$ and twisted generalized Weyl algebras},
Math. Scand. 92 (2003), no. 1, 5--30. 

\bibitem{MazTur2000}
V. Mazorchuk, L. Turowska,
\emph{On Gelfand-Zetlin modules over $U_q(gl_n)$},
Czechoslovak J. Phys. 50 (2000), no. 1, 139--144. 

\bibitem{Miyata1971}
T. Miyata,
\emph{Invariants of certain groups. 1},
Nagoya Math. Journal, 41 (1971), 69--73.

\bibitem{Molev2007}
A. I. Molev,
\emph{Gelfand-Tsetlin bases for classical Lie algebras}.
Handbook of algebra. Vol. 4, 109--170, Handb. Algebr., 4, Elsevier/North-Holland, Amsterdam, 2006. 
 
\bibitem{Panov2001}
A. Panov,
\emph{Fields of fractions of quantum solvable algebras.}
J. Algebra 236 (2001), no. 1, 110--121.
 
\bibitem{SheTod1954}
G. C. Shephard, J. A. Todd,
\emph{Finite unitary reflection groups}
Canadian J. Math. 6, (1954) 274--304. 

\bibitem{Smith1997}
L. Smith,
\emph{Polynomial invariants of finite groups. A survey of recent developments},
Bull. Amer. Math. Soc. (N.S.) 34 (1997), no. 3, 211--250. 

\bibitem{UenTakShi1990}
K. Ueno, T. Takebayashi, Y. Shibukawa,
\emph{Construction of Gelfand-Tsetlin basis for $U_q(gl(N+1))$-modules}. Publ. Res. Inst. Math. Sci. 26 (1990), no. 4, 667--679.

\end{thebibliography}

\end{document}